\def\Z{\mathbb Z}
\def\N{\mathbb N}
\def\A{\mathcal A}
\def\C{\mathcal C}
\def\P{\mathcal P}
\def\P{\mathcal P}
\def\PT{{\mathcal P}_{\Theta}}
\def\L{\mathcal L}
\def\Lu{{\mathcal L}(\uu)}
\def\uu{\mathbf u}
\def\tt{\mathbf t}
\def \id {{\rm Id}}
\def\PalT{{\rm Pal}_{\Theta}}
\def \Rk#1 {$\mathcal{R}_{#1}$}
\def \Rext {{\rm Rext}}
\def \Lext {{\rm Lext}}
\def \Bext {{\rm Bext}}
\def \Pext {{\rm Pext}}
\def \PextT {\Pext_{\Theta}}
\def \b {{\rm b}}
\def \FC#1 {
\mathcal{C}
\ifthenelse{\equal{#1}{}}{}{(#1)}
}
\def \PC#1 {
\mathcal{P}_{\Theta}
\ifthenelse{\equal{#1}{}}{}{(#1)}
}
\def \PCn#1 {
\mathcal{P}
\ifthenelse{\equal{#1}{}}{}{(#1)}
}
\def \Tr {R}
\def \Ta {\Theta_1}
\def \Tb {\Theta_2}
\newtheorem{thm}{Theorem}
\newtheorem{coro}[thm]{Corollary}
\newtheorem{lem}[thm]{Lemma}
\newtheorem{prop}[thm]{Proposition}
\crefname{thm}{theorem}{theorems}
\crefname{thrm}{theorem}{theorems}
\crefname{coro}{corollary}{corollaries}
\crefname{example}{example}{examples}
\crefname{lem}{lemma}{lemmas}
\crefname{lmm}{lemma}{lemmas}
\crefname{claim}{claim}{claims}
\crefname{obs}{observation}{observations}
\crefname{proposition}{proposition}{propositions}
\crefname{prop}{proposition}{propositions}
\crefname{defi}{definition}{definitions}
\theoremstyle{remark}
\crefname{example}{example}{examples}
\begin{document}


\title{Generalized Thue-Morse words and \\ palindromic richness}

\author{\v St\v ep\'an Starosta \\
  \multicolumn{1}{p{.7\textwidth}}{\centering\emph{ \small Department of Mathematics, FNSPE, Czech Technical University in Prague, Trojanova 13, 120~00 Praha~2, Czech Republic}}}

\date{}

\maketitle

\begin{abstract}
We prove that the generalized Thue-Morse word $\mathbf{t}_{b,m}$ defined for $b \geq 2$ and $m \geq 1$ as
$\mathbf{t}_{b,m} = \left ( s_b(n) \mod m \right )_{n=0}^{+\infty}$, where
$s_b(n)$ denotes the sum of digits in the base-$b$ representation of the integer $n$,
has its language closed under all elements of a group $D_m$ isomorphic to the dihedral group of order $2m$ consisting of morphisms and antimorphisms.
Considering simultaneously antimorphisms $\Theta \in D_m$, we show that $\mathbf{t}_{b,m}$ is saturated by $\Theta$-palindromes up to the highest possible level.
Using the terminology generalizing the notion of palindromic richness for
more antimorphisms recently introduced by the author and E. Pelantov\'a, we show that $\mathbf{t}_{b,m}$ is $D_m$-rich.
We also calculate the factor complexity of $\mathbf{t}_{b,m}$.
\end{abstract}



\section{Introduction}

Palindrome is a word which coincides with its reversal image, or
more formally, $w=\Tr(w)$, where $w$ is a finite word and the reversal (or mirror) mapping  $\Tr: \A^* \mapsto \A^*$ is defined by $\Tr(w_1w_2\ldots w_n)
=  w_nw_{n-1}\ldots w_1$ for letters $w_i \in \A$, $\A$ being an alphabet.
In \cite{DrJuPi}, the authors gave an upper bound on the number of palindromic factors in a finite word:
a finite word of length $n$ contains at most $n+1$ palindromic factors.
If this bound is attained, we say that a word is rich in palindromes (see \cite{GlJuWiZa}, or \cite{BrHaNiRe}, where such a word is called full).
This definition can be naturally extended to infinite words: an infinite word is rich in palindromes if every its factor is rich in palindromes.
For infinite words with language closed under reversal, i.e., containing with every factor also its reversal image,
 there exist several equivalent characterizations of palindromic richness.
Each of them can be adopted as definition.

Let us list three of these characterizations.
An infinite word $\uu$ with language closed under reversal is rich if one of the following statements holds:
\begin{enumerate}
 \item \label{rich1} any factor $w$ of $\uu$ of length $n$ contains exactly $n+1$ palindromic factors;
 \item \label{rich2} for any $n \in \N$, the equality $\Delta \C (n) + 2  =  \P(n) +  \P(n+1)$ is satisfied, where  $\Delta \C (n)$ denotes the first difference of the factor complexity of $\uu$ and $\P(n)$ denotes the palindromic complexity of $\uu$ (\cite{BuLuGlZa}).
\item \label{rich3} each complete return word of any palindrome occurring in $\uu$ is a palindrome as well (\cite{GlJuWiZa});
\end{enumerate}

Let us mention that the inequality
\begin{equation} \label{balazi} \Delta \C (n) + 2 \geq  \P(n) +
 \P(n+1) \end{equation}
  is valid for any infinite  word $\uu$  with language
 closed under reversal and for any $n$ (see \cite{BaMape}).
 Thus, both characterizations \ref{rich1} and \ref{rich2} express that a word $\uu$ rich in palindromes is saturated
 by palindromes up to the highest possible level.
Arnoux-Rauzy words (among them Sturmian words) and words
coding interval exchange transformation with the symmetric permutation belong to
the most prominent examples of rich words.

If we replace the reversal mapping
$\Tr$  by an antimorphism $\Theta$, we can define
\mbox{$\Theta$-palindromes} as words which are fixed points of $\Theta$,
i.e., $w=\Theta(w)$.
For any antimorphism $\Theta$, the notion of
$\Theta$-palindromic richness can be introduced and an analogy of
characterizations 1, 2 and 3 mentioned above can be formulated,
see \cite{Sta2010}.

Although the language of the Thue-Morse word is closed under two antimorphisms, it is not $\Theta$-rich for any of these two antimorphisms.
The author together with E. Pelantov\'a in \cite{PeSta1} explored
 infinite words with language closed under more
antimorphisms simultaneously.
For a given finite group $G$ formed by morphisms and antimorphisms
on $\A^*$, the words with language closed under
any element of $G$ are investigated.
If, moreover, such a word $\uu$ is uniformly recurrent, then a generalized version of the inequality \eqref{balazi} is proved: there exists an integer $N$ such
that
\begin{equation}\label{nerovnost}
\Delta \C (n) + \# G \ \ \geq   \sum_{\Theta \in
G^{(2)}}\Bigl(\P_{\Theta}(n) + \P_{\Theta}(n+1)\Bigr) \qquad
\hbox{for any} \ n \geq N\,, \end{equation}
 where $G^{(2)}$
denotes the set of involutive antimorphisms of $G$ and
$\P_{\Theta}$ is the $\Theta$-palindromic complexity, i.e.,
$\P_{\Theta}(n)$ counts the number of $\Theta$-palindromes of
length $n$ in the language of $\uu$.

Infinite uniformly recurrent words with language closed under all elements of $G$ and for which the equality  in \eqref{nerovnost}  is
attained for any $n$ greater than some integer $M$ are called \textit{almost $G$-rich}. Let us
emphasize that if the group $G$ contains besides
the identity only the reversal mapping $\Tr$,  the notion of almost
richness (as introduced in \cite{GlJuWiZa}) and the notion of almost
$G$-richness coincide.
In \cite{PeSta1}, \textit{$G$-richness} is also introduced.
Again, in case of $G = \{ \Tr, \id \}$ it coincides with classical palindromic richness.
The definition requires further notions, thus we omit it here and restrict ourselves to the following criterion of $G$-richness.
\begin{prop} \label{G_rich_criterion}
An infinite word with language closed under all elements of $G$ is $G$-rich if
\begin{itemize}
\item for any two antimorphisms $\Theta_1, \Theta_2 \in G$ and any non-empty $v \in \Lu$ it holds
$\Theta_1\neq \Theta_2 \Rightarrow  \Theta_1(v) \neq \Theta_2(v)$, and
\item for any two morphisms $\varphi_1, \varphi_2 \in G$ and any non-empty $v \in \Lu$  it holds \linebreak
$\varphi_1\neq \varphi_2 \Rightarrow  \varphi_1(v) \neq
\varphi_2 (v)$, and
\item the equality \eqref{nerovnost} is attained for all $n \geq 1$.
\end{itemize}
\end{prop}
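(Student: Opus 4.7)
The plan is to proceed by analogy with the classical equivalence among characterizations \ref{rich1}--\ref{rich3} and to extract combinatorial content from the equality version of \eqref{nerovnost}. Since the definition of $G$-richness is not reproduced in the excerpt, I would first unfold it from \cite{PeSta1}: it is the natural generalization of characterization \ref{rich3}, roughly stipulating that for every $\Theta \in G^{(2)}$ and every $\Theta$-palindrome $p \in \Lu$, each complete $\Theta$-return word of $p$ is again a $\Theta$-palindrome, together with a coherence condition relating different antimorphisms of $G$ on such returns.

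With this reformulation in hand, the core of the proof is a Rauzy-graph analysis at each length $n$. The inequality \eqref{nerovnost} is itself obtained by bounding, for each bispecial factor, the local contribution to $\Delta \C(n)$ by the maximal sum of $\Theta$-palindromic contributions across $\Theta \in G^{(2)}$, with a correction of size at most $\#G$ absorbing the remaining $G$-orbits of factors of length $n$. Assuming equality for all $n \geq 1$ therefore forces every bispecial factor to be a $\Theta$-palindrome that saturates its allotted bilateral multiplicity, with its left and right extensions paired by $\Theta$. I would then argue by induction on the length of a $\Theta$-palindrome $p$ that every complete $\Theta$-return word of $p$ is itself a $\Theta$-palindrome, exactly the condition defining $G$-richness.

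The two bullet conditions are what make this accounting faithful, and they are where I expect the main obstacle to lie. If $\Theta_1 \neq \Theta_2$ but $\Theta_1(v) = \Theta_2(v)$ held for some non-empty $v \in \Lu$, then a single bispecial factor could be counted simultaneously as both a $\Theta_1$- and a $\Theta_2$-palindrome, inflating the right-hand side of \eqref{nerovnost} without enlarging the underlying combinatorial structure and destroying the bijection between bispecial factors and their palindromic roles; the morphism condition plays the symmetric role for the action of $G$ on one-sided extensions. The delicate part of the argument will be organizing the induction so that at each step these injectivity hypotheses are invoked at the correct $G$-orbit, thereby translating the scalar equality in \eqref{nerovnost} into the pointwise structural statement about complete $\Theta$-return words demanded by the definition of $G$-richness from \cite{PeSta1}.
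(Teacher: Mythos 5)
There is a genuine gap, and in fact a mismatch of expectations worth flagging first: the paper does not prove \Cref{G_rich_criterion} at all. It is quoted as a known criterion from \cite{PeSta1}, where both the definition of $G$-richness and this sufficient condition are established; the present paper explicitly declines even to restate the definition (``The definition requires further notions, thus we omit it here''). So there is no internal proof to match, and any proof you supply must stand entirely on its own.

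Measured against that standard, your proposal is a plan rather than a proof, and its foundation is shaky. You begin by guessing the definition of $G$-richness (``roughly stipulating that \dots each complete $\Theta$-return word of $p$ is again a $\Theta$-palindrome, together with a coherence condition''), but in \cite{PeSta1} the definition is not phrased via return words: it is formulated through the structure of Rauzy graphs and their quotients under the action of $G$ (the ``further notions'' the paper alludes to), and the return-word characterization you posit is not known to be equivalent in the general $G$ setting --- establishing such an equivalence would itself be a substantial theorem. Every subsequent step inherits this uncertainty. Moreover, the two load-bearing steps are deferred rather than executed: the claim that equality in \eqref{nerovnost} ``forces every bispecial factor to be a $\Theta$-palindrome that saturates its allotted bilateral multiplicity'' is asserted without an accounting argument (you would need the analogue of \eqref{C_je_suma_bw} together with a per-bispecial-factor comparison of $\b(w)$ against $\sum_{\Theta}(\#\PextT(w)-1)$, and a proof that equality of the sums forces equality term by term, which requires knowing the sign of each local defect); and the induction ``on the length of a $\Theta$-palindrome $p$'' is named but never set up --- no base case, no inductive step, no explanation of how the injectivity hypotheses on $G$ enter. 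As written, the proposal identifies plausible ingredients but proves nothing; to make it a proof you would need to either reproduce the Rauzy-graph machinery of \cite{PeSta1} or rigorously establish your return-word reformulation first.
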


In \cite{PeSta1}, the authors show that the Thue-Morse word is $G$-rich, where $G$ is a group generated by the reversal mapping and the antimorphism exchanging $0$ and $1$.
The class of so-called generalized Thue-Morse words is also partially treated in the article, but the question of their $G$-richness is not resolved.
In this article we give a proof that all generalized Thue-Morse words are $G$-rich and we give explicitly the group $G$.

The generalized Thue-Morse sequences were already considered by E. Prouhet in 1851, see \cite{Prouhet}.
Let $s_b(n)$ denote the sum of digits in the base-$b$ representation of the integer $n$, for integers $b \geq 2$ and $m \geq 1$.
The generalized Thue-Morse word $\tt_{b,m}$ is defined as
$$
\tt_{b,m} = \left ( s_b(n) \mod m \right )_{n=0}^{+\infty}.
$$
Using this notation, the famous Thue-Morse word equals
$\tt_{2,2}$. The word $\tt_{b,m}$ is over the alphabet
$\{0,1,\ldots, m-1\} = \Z_m$.
Similarly to the classical Thue-Morse
word, also $\tt_{b,m}$  is a fixed point of a  primitive
substitution, as already mentioned in \cite{AlSh}.
It is easy to see  that the substitution fixing the word $\tt_{b,m}$ is defined by
$$ \varphi_{b,m}(k) =  k(k + 1)(k + 2) \ldots \bigl(k + (b-1) \bigr) \quad \text{for any} \ k \in \Z_m,
$$
where the letters are expressed modulo $m$.
As already stated in \cite{AlSh}, it can be shown that $\tt_{b,m}$ is periodic if and only if $b \equiv 1 \mod m$.

We show that for any
parameters $b$ and $m$ the language of the word $\tt_{b,m}$ is
closed under all elements of a group, denoted $D_m$, isomorphic to the dihedral group of order $2m$, see \Cref{TM_closed_under_Dm},
 and that $\tt_{b,m}$  is $D_m$-rich, see \Cref{TM_je_Gr}.
In the last section, we use the results to give the formula for the factor complexity of $\tt_{b,m}$.

\section{Preliminaries}
An \textit{alphabet} $\A$ is a finite set, its elements are called \textit{letters}.
A \textit{finite word} over $\A$ is a finite string $w = w_1w_2\ldots w_n$
of letters $w_i\in \A$.
Its length is $|w| = n$.
The set $\A^*$ is formed by all finite words and it is a free monoid with the empty word $\varepsilon$ as neutral element.
An \textit{infinite word} $\uu = (u_i)_{i=0}^{+\infty}$ is a sequence of letters $u_i \in \A$.
A word $v \in \A^*$ is a \textit{factor} of a word $w$ (finite or infinite) if there exist words $s,t\in \A^*$ such that $w=svt$.
If $s=\varepsilon$, then $v$ is a prefix of $w$, if  $t=\varepsilon$, then $v$ is a suffix of $w$.
An integer $i$ such that $v = w_i \ldots w_{i+|w|-1}$ is called an \text{occurrence} of $v$ in $w$.
We say that an infinite word is  \textit{uniformly recurrent} if for each factor the gaps between its successive occurrences are bounded.

 By $\L_n(\uu)$ we denote the set of factors of $\uu$ of length $n$.
 The set of all factors of $\uu$ is denoted by $\Lu$ and is called the language of $\uu$.
 The \textit{factor complexity} $\C$ of an infinite word $\uu$ is a mapping $\N \mapsto \N$ counting the number
 of distinct factors of a given length, i.e., $\C(n) = \# \L_n(\uu)$.

A letter $a \in \A$ is a \textit{left extension} of a factor $w \in \Lu$
if $aw \in \Lu$.
If a factor has at least two distinct left extensions, then it is said to be \textit{left special}.
The set of all left extensions of $w$ is denoted $\Lext(w)$.
The definition of right extensions, $\Rext(w)$ and right special is analogous.
A factor which is left and right special is called \textit{bispecial} (BS).

The \textit{bilateral order} $\b(w)$ of a factor $w$ is the number
$\b(w) :=  \# \Bext(w) -  \# \Lext(w) - \# \Rext(w) + 1$
where the quantity $ \Bext(w) = \{ awb \in \Lu \mid a,b \in \A \}$.
In \cite{Ca}, the following relation between the second difference of factor complexity and bilateral orders is shown:
\begin{equation} \label{C_je_suma_bw}
\C(n+2) - 2 \C(n+1) + \C(n) = \Delta^2 \C(n) = \sum_{w \in \L_n(\uu)} \b(w).
\end{equation}
One can easily show that if $w$ is not a bispecial factor, then $\b(w) = 0$.
Thus, to enumerate the factor complexity of an infinite word $\uu$
one needs to calculate $\C(0), \C(1)$, and bilateral orders of all its bispecial factors.

A  mapping $\varphi$ on $\A^*$ is called a \textit{morphism} if $ \varphi(vw)=  \varphi(v) \varphi(w)$ for any $v,w
 \in \mathcal{A}^*$;
 an \textit{antimorphism} if $ \varphi(vw)=  \varphi(w) \varphi(v)$ for
any $v,w \in \mathcal{A}^*$.
By $AM(\A^*)$ we denote the set of all morphisms and antimorphisms over $\A^*$.
Let $\nu \in AM(\A^*)$.
We say that $\Lu$ is \textit{closed under $\nu$}
if for all $w \in \Lu$ we have $\nu(w) \in \Lu$.

It is clear that the reversal mapping $\Tr$ is an antimorphism.
Moreover, it is an involution, i.e., $\Tr^2 = \id$.
A fixed point of an antimorphism $\Theta$  is called \textit{$\Theta$-palindrome}. 
If $\Theta = \Tr$, then we say palindrome or classical palindrome instead of $\Tr$-palindrome.
The set of all $\Theta$-palindromic factors of an infinite word $\uu$ is denoted by $\PalT(\uu)$.
The \textit{$\Theta$-palindromic complexity} of $\uu$ is the mapping $\PT: \N \mapsto \N$ given by
$\PT(n)= \#  \left ( \PalT(\uu) \cap \L_n(\uu) \right )$.
If $a \in \A$, $w \in \PalT(\uu)$,
 and $aw\Theta(a) \in \Lu$, then $aw\Theta(a)$ is said to be a~\textit{$\Theta$-palindromic extension} of $w$ in $\uu$.
The set of all $\Theta$-palindromic extensions of $w$ is denoted by $\PextT(w)$.

\section{Generalized Thue-Morse words and dihedral groups}

In this section we show that $\L(\tt_{b,m})$ is closed under all elements of an explicit group $G \subset AM(\A^*)$.
Fix $b \geq 2$ and $m \geq 1$.
In what follows, to ease the notation, we denote $\varphi = \varphi_{b,m}$,
 the alphabet is considered to be $\Z_m$,
and letters are expressed modulo $m$.

For all $x \in \Z_m$ denote by $\Psi_{x}$ the antimorphism given by
$$
\Psi_{x}(k) := x - k \quad \text{for all } k \in \Z_m
$$
and by $\Pi_{x}$ the morphism given by
$$
\Pi_{x}(k) := x + k \quad \text{for all } k \in \Z_m.
$$
Denote by $D_m$ the set $D_m = \{ \Psi_{x} \bigm | x \in \Z_m \} \cup \{ \Pi_{x} \bigm | x \in \Z_m \}$.
It is easy to show that $D_m$ is a group and can be generated by $2$ elements, for instance one can choose $\Pi_{1}$ and $\Psi_{0}$.
Since the order of $\Pi_1$ is $m$, $\Psi_0$ is an involution, and $\Psi_0 \Pi_1$ is also an involution,
 $D_m$ is isomorphic to the dihedral group of order $2m$.

The following property of $\varphi$ will help us to prove the next proposition.

\begin{enumerate}[Property I]
 \item \label{p_komutovani} For all $x \in \Z_m$, we have $\Pi_{x} \varphi = \varphi \Pi_{x}$ and $\Psi_{x} \varphi = \varphi \Psi_{x+b-1}$.
 \begin{proof}
 It follows directly from the definitions of $\varphi$, $\Pi_x$ and $\Psi_x$.
 \end{proof}
\end{enumerate}

\begin{prop} \label{TM_closed_under_Dm}
The language of $\tt_{b,m}$ is closed under all elements of $D_m$.
\end{prop}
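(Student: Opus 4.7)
My plan is to reduce the statement to closure under the two generators $\Pi_1$ and $\Psi_0$ of $D_m$ and then transport an arbitrary factor of $\tt_{b,m}$ through these maps using the commutation identities recorded in Property I. The reduction is immediate: if $\L(\tt_{b,m})$ is closed under two maps $\nu_1, \nu_2 \in AM(\A^*)$, then it is closed under their composition, since $(\nu_1\nu_2)(w) = \nu_1(\nu_2(w))$; so it suffices to check closure under $\Pi_1$ and $\Psi_0$.

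The key auxiliary tool is the following. Because $\varphi(k)$ begins with the letter $k$, for every $k \in \Z_m$ the limit $\tt_k := \lim_{n\to\infty}\varphi^n(k)$ is a well-defined one-sided infinite fixed point of $\varphi$, and by primitivity of $\varphi$ all the $\tt_k$ share a single common language, which is precisely $\L(\tt_{b,m})$. In particular every factor of $\tt_{b,m}$ already appears inside some $\varphi^n(0)$ (as a factor of a sufficiently long prefix), and every factor of some $\varphi^n(k)$ lies in $\L(\tt_{b,m})$.

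Now fix $w \in \L(\tt_{b,m})$ and choose $n$ with $w$ a factor of $\varphi^n(0)$. For $\Pi_1$: Property I gives $\Pi_1\varphi = \varphi\Pi_1$, hence $\Pi_1\varphi^n = \varphi^n\Pi_1$, and so $\Pi_1(w)$ is a factor of $\varphi^n(\Pi_1(0)) = \varphi^n(1)$, which is a prefix of $\tt_1$; thus $\Pi_1(w)\in\L(\tt_1)=\L(\tt_{b,m})$. For $\Psi_0$: a short induction on $n$ using the second half of Property I yields $\Psi_0\varphi^n = \varphi^n\Psi_{c_n}$ for some $c_n \in \Z_m$ depending only on $n$ and $b$, so $\Psi_0(w)$ occurs in $\Psi_0(\varphi^n(0)) = \varphi^n(\Psi_{c_n}(0)) = \varphi^n(c_n)$, a prefix of $\tt_{c_n}$, whose language is again $\L(\tt_{b,m})$.

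The only nontrivial ingredient to justify is the identification $\L(\tt_k) = \L(\tt_{b,m})$ for every letter $k$. I would invoke it as a standard consequence of primitivity of $\varphi$—all fixed points of a primitive substitution generate the same minimal subshift—or else derive it directly: by primitivity, for $N$ large $\varphi^N(0)$ contains every letter of $\Z_m$, so every factor of $\varphi^n(k)$ appears as a factor of $\varphi^{n+N}(0)$, and the converse inclusion is symmetric. No further estimate or combinatorial technicality seems to be needed.
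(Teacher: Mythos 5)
Your argument is correct, but it reaches the conclusion by a somewhat different route than the paper. Both proofs hinge on the same key fact, the commutation relations $\Pi_x\varphi=\varphi\Pi_x$ and $\Psi_x\varphi=\varphi\Psi_{x+b-1}$ of Property I; the difference is in the bookkeeping around them. The paper runs an induction on the length of the factor $w$: any $w$ of length $n+1\geq 2$ sits inside $\varphi(v)$ for some strictly shorter factor $v$, one commutes a single application of $\varphi$ past the given element of $D_m$, and the induction hypothesis applied to $v$ closes the loop. You instead anchor every factor inside a power $\varphi^n(0)$, iterate the commutation to get $\Pi_1\varphi^n=\varphi^n\Pi_1$ and $\Psi_0\varphi^n=\varphi^n\Psi_{n(b-1)}$, and land in $\varphi^n(k)$ for a possibly different letter $k$; this forces you to import the extra fact that all one-sided fixed points $\tt_k$ of the primitive substitution $\varphi$ have the same language. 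That fact is true and you justify it adequately (primitivity is asserted in the paper, and your direct derivation via ``$\varphi^N(0)$ contains every letter for $N$ large'' is sound since $\varphi^n(0)$ contains the letters $0,1,\dots,n(b-1)$ modulo $m$), but it is an ingredient the paper's single-step induction avoids entirely. Your reduction to the two generators $\Pi_1$ and $\Psi_0$ is a genuine small economy over the paper, which treats all $\Pi_x$ and $\Psi_x$ at once, though the paper's uniform treatment costs nothing since the commutation relations are stated for arbitrary $x$. In short: same engine, different chassis; the paper's version is more self-contained, yours leans on standard substitution dynamics to shorten the combinatorial part.
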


\begin{proof}
We show the claim by induction on the length $n$ of factors.
For $n=1$ the statement is easy to verify since $\L_1(\tt_{b,m}) = \Z_m$.
Suppose now the claim holds for factors of length $n$ and take $w \in \L_{n+1}(\tt_{b,m})$.
It is clear that there exists a factor $v$, $1 \leq |v| \leq n$, such that $w$ is a factor of $\varphi(v)$.
Let $x \in \Z_m$, then using Property \ref{p_komutovani}, one has
$ \Pi_{x} \varphi (v) = \varphi \Pi_{x} (v)$.
Since we supposed $\Pi_{x} (v) \in \L(\tt_{b,m})$, it is clear that $\Pi_{x}(w)$ is a factor of $\tt_{b,m}$
Using again Property \ref{p_komutovani} for $\Psi_x$, we have also
$\Psi_{x} \varphi (v) = \varphi \Psi_{x+b-1} (v)$, and thus $\Psi_{x} (w)$ is a factor of $\tt_{b,m}$.
\end{proof}

\section{$D_m$-richness of $\tt_{b,m}$}

In this section we show that the word $\tt_{b,m}$ is $D_m$-rich.
Let $\pi: \Z_m \mapsto \Z_m$ denote the permutation defined for all $k \in \Z_m$ by
$$
\pi(k) = \text{the last letter of } \varphi(k) = k + b - 1 = \Pi_{b-1}(k).
$$
Denote by $q$ the order of $\pi$, i.e., the smallest positive integer such that $q(b-1) \equiv 0 \mod m$.
We say that a factor $v = v_0v_1 \ldots v_{s-1} \in \L(\tt_{b,m})$ is an \textit{ancestor}
of a factor $w \in \L(\tt_{b,m})$ if $w$ is a factor of $\varphi(v)$ and is not a factor of $\varphi(v_1 \ldots v_{s-1})$
or $\varphi(v_0 \ldots v_{s-2})$.

The following properties of $\varphi$ and $\tt_{b,m}$ can be easily deduced.

\begin{enumerate}[Property I]
  \setcounter{enumi}{1}
  \item \label{p_uniform} $\varphi$ is uniform, i.e., for all $k,\ell \in \Z_m$, $|\varphi(k)| = |\varphi(\ell)| = b$.
  \item \label{p_bifix} $\varphi$ is bifix-free, i.e., for all $k,\ell \in \Z_m$, $k \neq \ell$, the first  letter of $\varphi(k)$ differs from the first letter of  $\varphi(\ell)$, and the same holds for the last letter.
  \item \label{p_L2} $\L_2(\tt_{b, m}) = \{ \pi^k(r-1)r \mid r \in \Z_m, 0 \leq k \leq q-1 \}$.
\begin{proof}
Denote by $L_0 := \{ (r-1)r \mid r \in \Z_m \}$.
It is clear that $L_0 \subset \L_2(\tt_{b, m})$.
For all $i$, denote by $L_{i+1}$ the set of factors of length $2$ of the words $\varphi(w)$, $w \in L_i$.
From the definition of $\pi$, it is clear that $L_i = \{ \pi^k(r-1)r \mid r \in \Z_m, 0 \leq k \leq i \}$.
The definition of $q$ then guarantees $L_{q-1} = L_q$ and the equality $L_{q-1} = \L_2(\tt_{b, m})$ follows from the construction of the sets $L_i$.
\end{proof}
  \item \label{p_L3} $\L_3(\tt_{b,m}) = \{ \pi^k(t-1) t(t+1) \mid t \in \Z_m, 0 \leq k \leq q-1 \} \cup \{ (t-1)t \pi^{-k}(t+1) \mid t \in \Z_m, 0 \leq k \leq q-1 \}$.
\begin{proof}
It follows from Property \ref{p_L2} and the definition of $\varphi$.
\end{proof}
  \item \label{p_pali2} For all words $w$ of length $1$ or $2$, there exists exactly one $x$ such that $w$ is a $\Psi_x$-palindrome.
\begin{proof}
It follows directly from the definition of $\Psi_x$.
\end{proof}
  \item \label{p_prenos_psi} If for $x \in \Z_m$, the word $w \in \L(\tt_{b,m})$ is a $\Psi_x$-palindrome, then the factor $\varphi(w)$ is a $\Psi_{x-b+1}$-palindrome.
\begin{proof}
It follows directly from Property \ref{p_komutovani}.
\end{proof}
  \item \label{p_prenos_nu} Let $w \in \L(\tt_{b,m})$ be a BS factor and $\nu \in D_m$.
Then $\nu(w)$ is BS factor and $\b(w) = \b(\nu(w))$.

Moreover, if $w$ is a \mbox{$\Theta$-palindrome} for some antimorphism $\Theta \in D_m$, then $\nu(w)$ is a $\Theta'$-palindrome for some $\Theta' \in D_m$.
\begin{proof}
Property \ref{p_bifix} and \Cref{TM_closed_under_Dm} guarantee the first part of the statement.
The second part of the statement can be verified by setting $\Theta' = \nu\Theta\nu^{-1}$.
\end{proof}
  \item If $w = w_0 \ldots w_{s-1} \in \L(\tt_{b,m})$ and there is an index $i$ such that $w_{i+1} \neq w_i + 1$, then $w$ has exactly one ancestor.
\begin{proof}
It follows directly from the definition of $\varphi$ and Property \ref{p_uniform}.
\end{proof}
  \item \label{p_2bplus1} Let $b \not \equiv 1 \mod m$
and $w \in \L(\tt_{b,m})$. If $|w| > 2b$, then $w$ has exactly one ancestor.
\begin{proof}
Take $|w| = 2b+1$.
Suppose that there is no index $i$ such that $w_{i+1} \neq w_i + 1$, i.e., $w = k(k+1)\ldots (k+2b)$ for some integer $k$.
Since every ancestor of $w$ is of length $3$,
it implies that there exists a factor $v \in \L_3(\tt_{b,m})$ such that
$v = \ell (\ell + b) (\ell + 2b)$ for some $\ell$.
Since $b \not \equiv 1 \mod m$,
it is a contradiction with Property \ref{p_L3}.
\end{proof}
  \item \label{p_prefix} If $w \in \L(\tt_{b,m})$ is BS and $|w| \geq b$,
  then there exist letters $x$ and $y$ such that $\varphi(x)$ is a prefix of $w$ and $\varphi(y)$ is a suffix of $w$.
\begin{proof}
The claim is a direct consequence of Property \ref{p_bifix} and the definition of $\varphi$.
\end{proof}
\end{enumerate}

These properties are used to prove the next two lemmas.
The first lemma summarizes the bilateral orders and $\Theta$-palindromic extensions of longer BS factors.

\begin{lem} \label{TM_dlouhe_BS}
Let $b \not \equiv 1 \mod m$.
Let $w \in \L(\tt_{b,m})$ be a BS factor such that $|w| \geq 2b$.
Then there exists a BS factor $v$ such that $\varphi(v) = w$.
Furthermore, $\b(w) = \b(v)$.

If $v$ is a $\Ta$-palindrome, $\Ta \in D_m$, then there exists a unique $\Tb \in D_m$ such that
 $w$ is a $\Tb$-palindrome.
Moreover, $\# \Pext_{\Tb} (w) = \# \Pext_{\Ta} (v)$.
\end{lem}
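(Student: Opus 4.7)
The plan is to pull $w$ back along $\varphi$ to a shorter BS factor $v$ with $\varphi(v) = w$, and then transfer extensions, bilateral order, palindromicity, and palindromic extensions across the substitution.

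\emph{Construction of $v$.} Property \ref{p_prefix} supplies letters $p, s \in \Z_m$ with $\varphi(p)$ a prefix and $\varphi(s)$ a suffix of $w$. For $|w| > 2b$, Property \ref{p_2bplus1} ensures $w$ has a unique ancestor, and the full-block prefix and suffix force this ancestor to be a factor $v$ satisfying $\varphi(v) = w$. For the boundary case $|w| = 2b$, the prefix $\varphi(p)$ and suffix $\varphi(s)$ already tile $w$, so one may take $v := ps$. Because $\varphi$ is bifix-free (Property \ref{p_bifix}) and uniform, every occurrence of $w$ in $\tt_{b,m}$ is then forced to sit at a position congruent to $0$ modulo $b$ in the canonical $\varphi$-decomposition, i.e.\ to be block-aligned.

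\emph{Extensions and bilateral order.} This block-alignment makes extending $w$ by a letter equivalent to extending $v$ by the corresponding abstract letter. Writing $\pi = \Pi_{b-1}$ for the (bijective) last-letter map, a letter $a$ left-extends $w$ iff there is a unique $c \in \Lext(v)$ with $a = \pi(c)$, giving a bijection between $\Lext(v)$ and $\Lext(w)$. Right extensions match directly, since the first letter of $\varphi(d)$ is $d$ itself, so $\Rext(w) = \Rext(v)$. Composing the two bijections yields $\#\Bext(w) = \#\Bext(v)$, and therefore $v$ is BS (inheriting both special properties from $w$) with $\b(v) = \b(w)$ by the defining formula of the bilateral order.

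\emph{Palindromes and their extensions.} Every antimorphism in $D_m$ has the form $\Psi_y$. If $v$ is a $\Ta = \Psi_y$-palindrome, then Property \ref{p_komutovani} produces a unique $x \in \Z_m$ with $\varphi \Psi_y = \Psi_x \varphi$; applying this identity to $v$ gives $\Psi_x(w) = w$, so I set $\Tb := \Psi_x$. Uniqueness of $\Tb$ follows since $w_0 = x - w_{|w|-1}$ already determines $x$. For the count of palindromic extensions, I would define the map $c \mapsto a := \pi(c)$ between $\Pext_{\Ta}(v)$ and $\Pext_{\Tb}(w)$: given $cv\Psi_y(c) \in \L(\tt_{b,m})$, the image $\varphi(cv\Psi_y(c)) = \varphi(c)\, w\, \varphi(\Psi_y(c))$ contains the length-$(|w|+2)$ factor $a\, w\, (y-c)$ as its central window, and a direct computation using the shift relating $x$ and $y$ shows $y - c = \Psi_x(a)$, placing $a$ in $\Pext_{\Tb}(w)$. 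Conversely, block-alignment forces any $a \in \Pext_{\Tb}(w)$ to equal $\pi(c)$ for a unique $c \in \Lext(v)$, and the letter following $w$ must be the first letter of the next block $\varphi(\Psi_y(c))$, so $cv\Psi_y(c) \in \L(\tt_{b,m})$. This bijection yields the desired equality.

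The main obstacle is securing the block-alignment of every occurrence of $w$, since this is what legitimizes the transfer through $\varphi$. Property \ref{p_2bplus1} handles $|w| > 2b$ cleanly via the unique ancestor, but the boundary case $|w| = 2b$ needs an independent short argument based on Properties \ref{p_prefix} and \ref{p_bifix}. A secondary bookkeeping point is tracking the exact $\pm(b-1)$ shift in the commutation identity of Property \ref{p_komutovani} and verifying it is compatible with the map $a = \pi(c)$ used in the palindromic-extension bijection.
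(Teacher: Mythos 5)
Your overall route is the same as the paper's: pull $w$ back through $\varphi$ to an ancestor $v$, transfer special extensions and the bilateral order using uniformity and bifix-freeness, and transfer $\Theta$-palindromicity and the count of palindromic extensions via the commutation relation of Property \ref{p_komutovani}. Your explicit bijections ($\Lext(v)\to\Lext(w)$ via $\pi$, $\Rext(v)=\Rext(w)$, and the map $c\mapsto\pi(c)$ on palindromic extensions) are a spelled-out version of what the paper delegates to Properties \ref{p_bifix} and \ref{p_prenos_psi}, and that part is sound.

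There is, however, a genuine gap at the case $|w|=2b$, and you have mislocated the tools needed to close it. Setting $v:=ps$ gives a word with $\varphi(v)=w$ \emph{as a string}, but it does not follow that $ps\in\L(\tt_{b,m})$, nor that every occurrence of $w$ is block-aligned: Property \ref{p_2bplus1} only applies for $|w|>2b$, and bifix-freeness plus uniformity alone do not rule out a misaligned occurrence, i.e.\ an ancestor of length $3$. Indeed, if $w$ sits at offset $j$ with $1\le j\le b-1$ inside $\varphi(u_0u_1u_2)$, then Property \ref{p_prefix} forces the window $(u_0+j)\cdots(u_0+b-1)u_1\cdots(u_1+j-1)$ to be a run of consecutive letters, hence $u_1=u_0+b$ and likewise $u_2=u_1+b$; the contradiction comes only from the explicit description of $\L_3(\tt_{b,m})$ in Property \ref{p_L3} (every length-$3$ factor has at least one consecutive difference equal to $1$, whereas here both differences are $b\not\equiv 1 \bmod m$). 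Note that $\L_2$ alone does not suffice, since $(u_0+b)(u_0+2b)=\pi^{q-1}(r-1)r$ is a legitimate element of $\L_2$. So the ``independent short argument'' you defer cannot be based on Properties \ref{p_prefix} and \ref{p_bifix} alone; it needs Property \ref{p_L3}, which is also the only place the hypothesis $b\not\equiv 1\bmod m$ enters — a hypothesis your sketch never uses. Without this step the reduction has no valid base case, since every long BS factor is ultimately desubstituted down to length $2b$. A smaller bookkeeping remark: your identity $y-c=\Psi_x(a)$ for $a=\pi(c)$ holds with the shift $x=y+b-1$, which is the correct direction of the commutation relation (the signs printed in Properties \ref{p_komutovani} and \ref{p_prenos_psi} should be checked against a concrete example such as $m=3$, $b=2$), so your caution about the $\pm(b-1)$ shift is warranted.
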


\begin{proof}
Let $w$ be a BS factor of length $|w| \geq 2b$.

If $|w| > 2b$, the existence of a unique ancestor $v$ follows from Property \ref{p_2bplus1}.
If $|w| = 2b$ and if there exists an ancestor $v \in \L_3(\tt_{b,m})$,
we have a contradiction with Properties \ref{p_prefix} and \ref{p_L3}.
Thus, if $|w| = 2b$, then there exists a unique ancestor $v \in \L_2(\tt_{b,m})$ such that $\varphi(v) = w$.

Property \ref{p_bifix} guarantees $v$ is BS and $\b(w) = \b(v)$.

Suppose $v$ is a $\Psi_x$-palindrome for some $x \in \Z_m$.
According to Property \ref{p_prenos_psi}, $w$ is a \mbox{$\Psi_{x-b+1}$-palindrome}.
The fact that there is no other such antimorphism follows from Property \ref{p_pali2}.

The equality $\# \Pext_{\Psi_x} (v) = \# \Pext_{\Psi_{x-b+1}} (w)$ follows again from Property \ref{p_prenos_psi}.
\end{proof}

Thanks to the last lemma, we have to evaluate only the bilateral orders
and number of palindromic extensions of shorter factors.
The next lemma exhibits these values for concerned lengths of BS factors.

\begin{lem} \label{TM_kratke_BS}
Let $b \not \equiv 1 \mod m$.
Let $w$ be a BS factor of $\tt_{b,m}$ such that $1 \leq |w| < 2b$.
Let $\Theta \in D_m$ be the unique antimorphism such that $w = \Theta(w)$.
Then the values $\b(w)$ and $\# \PextT(w)$ are as shown in \Cref{TM_vycet_kratkych_BS}.
\end{lem}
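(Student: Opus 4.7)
The plan is to enumerate all bispecial factors $w$ with $1 \leq |w| < 2b$ directly and compute the relevant invariants case by case. First I would observe that any such factor is contained in some $\varphi(xy)$ with $xy \in \L_2(\tt_{b,m})$, so by Property \ref{p_L2} it has the shape of a (possibly truncated) concatenation of two consecutive arithmetic progressions $a(a+1)\cdots(a+b-1)\,r(r+1)\cdots(r+b-1)$, where $a = \pi^k(r-1)$ for some $0 \leq k \leq q-1$. The number of admissible pairs $(a,r)$ controls which letters of $\Z_m$ may appear at each position, and in particular determines $\Lext(w)$ and $\Rext(w)$.

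Next I would stratify the possible BS factors by shape: single letters; entire short ascending runs $k(k+1)\cdots(k+j)$ with $j \leq b-1$ lying inside the image of a single letter; and factors straddling exactly one boundary between $\varphi(x)$ and $\varphi(y)$. For each shape I read off $\Lext(w)$, $\Rext(w)$ and $\Bext(w)$ from Properties \ref{p_L2} and \ref{p_L3} together with the explicit form of $\varphi$: a left extension of $w$ corresponds to a letter $c$ such that $(c,w_0)$ is realised in $\L_2$, and analogously on the right, while $\Bext(w)$ forces compatibility through an ancestor of length two or three. This gives $\b(w) = \#\Bext(w) - \#\Lext(w) - \#\Rext(w) + 1$ explicitly.

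To identify $\Theta$ and count $\#\PextT(w)$, I would use Property \ref{p_pali2}: the constraints $w_i + w_{|w|-1-i} = x$ together with the arithmetic-progression shape of $w$ pin down a unique $x \in \Z_m$, hence a unique $\Theta = \Psi_x$. Once $\Theta$ is known, $\PextT(w)$ is precisely the subset of $\Bext(w)$ consisting of elements of the form $cw\Psi_x(c)$, which can be counted directly from the previous step.

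The main obstacle will be clean bookkeeping of the boundary cases, particularly factors of length exactly $b$ that coincide with $\varphi(k)$ for some $k$ (where Property \ref{p_bifix} forces unique one-sided extensions, so they cannot be BS), and factors that wrap across a boundary $\varphi(x)\varphi(y)$ where both $\pi^k(x-1)$ for varying $k$ on the left and $\pi^{-k}(y+1)$ on the right contribute independently to the extension counts. Once these cases are organized by length, each row of \Cref{TM_vycet_kratkych_BS} reduces to an elementary verification in $\Z_m$, and the hypothesis $b \not\equiv 1 \pmod m$ is used exactly to prevent the degenerate collapse where $\tt_{b,m}$ would be eventually periodic.
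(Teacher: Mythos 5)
There is a genuine gap, and it sits precisely at the most important row of \Cref{TM_vycet_kratkych_BS}. You assert that a factor of length exactly $b$ coinciding with $\varphi(k)$ "cannot be BS" because Property \ref{p_bifix} forces unique one-sided extensions. This contradicts the statement you are proving: the table records $\b(w)=1$ and $\# \PextT(w)=2$ for $|w|=b$, and the unique BS factor of that length is exactly $\varphi(k)$ (after normalizing by $\Pi_k$, it is $w=01\ldots(b-1)=\varphi(0)$). The error comes from looking only at the occurrence of $w$ as the image of the single letter $k$. Because $w$ is an arithmetic progression with step $1$, it also occurs straddling the boundary of $\varphi\bigl((i-b)i\bigr)$ for every $0<i<b$ (these $2$-letter words are in $\L_2(\tt_{b,m})$ since $\pi^{q-1}(i-1)=i-b$), so $w$ has the ancestors $0$ and $(i-b)i$, hence $q$ left and $q$ right extensions, hence it is BS with $\b(w)=2q-q-q+1=1$. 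This row is the only one with positive bilateral order and is what drives the complexity growth, so omitting it breaks both the lemma and its later use.

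The underlying methodological problem is your stratification of BS factors "by shape" (inside the image of a single letter versus straddling one boundary). That is a classification of \emph{occurrences}, not of \emph{factors}: one and the same word typically has ancestors of several kinds simultaneously, and $\Lext(w)$, $\Rext(w)$, $\Bext(w)$ must be collected as unions over \emph{all} ancestors. The paper avoids this trap by first proving (via Property \ref{p_prefix}) that every BS factor with $1\le |w|<2b$ is a single run $k(k+1)\ldots(k+s-1)$, reducing to $w=01\ldots(s-1)$ by Property \ref{p_prenos_nu}, and then, for each length range, explicitly listing \emph{every} ancestor of that one word before counting extensions. Your outline would also need such an exhaustive ancestor enumeration in the cases $2\le s\le b-1$ and $b+1\le s\le 2b-2$ (where there are $2q-1$ and $3$ bilateral extensions respectively, again coming from ancestors of mixed types), not only at $s=b$. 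Finally, the remark that $b\not\equiv 1 \bmod m$ "only prevents eventual periodicity" undersells its role: it is used pointwise in the computations, e.g.\ in case $s=2b-1$ to rule out an ancestor of length $3$ via Property \ref{p_L3} and to keep the listed extensions distinct.
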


\renewcommand{\tabcolsep}{0.2cm}
\renewcommand{\arraystretch}{1.3}

\begin{table}[h]
\begin{center}
\begin{tabular}{c|c|c}
$|w|$ & $\b(w)$ & $\# \PextT(w)$ \\ \hline \hline
$1 \leq |w| \leq b - 1$ & $0$ & $1$ \\ \hline
$|w| = b$ & $1$ & $2$ \\ \hline
$b+1 \leq |w| \leq 2b - 2$ & $0$ & $1$ \\ \hline
$|w| = 2b-1$ & $-1$ & $0$ \\
\end{tabular}
\end{center}
\caption{The bilateral order and number of palindromic extensions of a BS factor $w$ of $\tt_{b,m}$,  $b \not \equiv 1 \mod m$,  according to its length $|w|$. $\Theta \in D_m$ is the antimorphism such that $\Theta(w) = w$.}
\label{TM_vycet_kratkych_BS}
\end{table}

\begin{proof}
Let $w$, $0 < |w| < 2b$, be a BS factor and let $s$ denote its length.
It follows from Property \ref{p_prefix} that there exists $k \in \Z_m$ such that
$$
	w = k(k+1) \ldots (k+s-1).
$$
Since $w = \Pi_k \left( 01 \ldots (s-1)\right )$, thanks to Property \ref{p_prenos_nu}
we may take $w = 01 \ldots (s-1)$.

Let $\Theta \in D_m$ be the unique antimorphism such that $w = \Theta(w)$.
From the form of $w$ it follows that $\Theta = \Psi_{s-1}$.

We discuss the following cases distinguished by $s$.

\begin{enumerate}[a)]
  \item $s = 2b-1$. \newline
One can see that $w$ has exactly $2$ ancestors: the words $0b$ and $(m-1)(b-1)$.
(Both $0b$ and $(m-1)(b-1)$ belong to $\L_2(\tt_{b,m})$ since $\pi^{q-1}(b-1)b = 0b$ and $\pi^{q-1}(b-2)(b-1) = (m-1)(b-1)$.)
The only pairs of letters $x$ and $y$ such that $xwy \in \L(\tt_{b,m})$ are $m-1+b-1$ and $2b-1$, and $m-1$ and $b$.
Therefore, $\b(w) = 2 - 2 - 2 + 1 = - 1$.
Since $\Theta = \Psi_{2b-2}$, one can see that no extension $xwy$ is a~$\Theta$-palindrome, i.e., $\# \PextT(w) = 0$.
 \item $b+1 \leq s \leq 2b-2$. \newline
One can deduce that $w$ has $2b - s + 1$ ancestors, namely
$( s -2b + i)(s - b + i)$ for $0 \leq i < 2b - s$.
(Again, all these words are factors of $\tt_{b,m}$ since $\pi^{q-1}(i-1) = i-b$ for all $i$.)
The only extensions $xwy$ appearing in $\L(\tt_{b,m})$ are $(m-1)w(s-b+1)$, $(m-1)ws$, and $(m+b-2)ws$.
Thus, we have $\b(w) = 3 - 2 - 2 + 1 = 0$ and $\# \PextT(w) = 1$ ($(m-1)ws$ is a $\Theta$-palindrome).
\item $s = b$. \newline
The ancestors of $w$ are the factors $0$ and $(i-b)i$, for $0 < i < b$.
The extensions $xwy$ are $(m-1)wb$, $\pi^{\ell}(-1)w1$, and $(m+b-2)w\pi^{-\ell}(1)$,
where $0 \leq \ell < q$.
Therefore, $\b(w) = 2q - q - q + 1 = 1$.
Since the only $\Theta$-palindromes are $(m-1)wb$ and $(m+b-2)w1$, we have $\# \PextT(w) = 2$.
\item $2 \leq s \leq b-1$. \newline
The ancestors are the factors $m - 1, \ldots, m + s - b$
, the factor $0$, and the factors $(i-b)i$, for $0 < i < s$.
The extensions $xwy$ are $\pi^{\ell}(-1)ws$ and $(-1)w\pi^{-\ell}(s)$,
where $0 \leq \ell < q$.
Thus, $\b(w) = (2q - 1) - q - q + 1 = 0$ and since $(m-1)ws$ is the only $\Theta$-palindromic extension, we have $\# \PextT(w) = 1$.
\item $s = 1$. \newline
One can see that the extensions are $\pi^{\ell}(-1)w1$ and $(-1)w\pi^{-\ell}(1)$,
where $0 \leq \ell < q$.
Thus, $\b(w) = 0$ and $\# \PextT(w) = 1$ as in the previous case.
\end{enumerate}

\end{proof}

\begin{coro} \label{TM_bispecialy_zaver}
Let $w$ be a non-empty BS factor of $\tt_{b,m}$ and $b \not \equiv 1 \mod m$.
Then
\begin{enumerate}
  \item there exists a unique antimorphism $\Theta \in D_m$ such that $\Theta(w) = w$;
  \item $\b(w) = \# \PextT(w) - 1$.
\end{enumerate}
\end{coro}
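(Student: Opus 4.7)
The plan is to proceed by strong induction on $|w|$: \Cref{TM_kratke_BS} will settle the short BS factors as the base case, and \Cref{TM_dlouhe_BS} will propagate the statement from a smaller BS ancestor to each longer BS factor. Before the induction I would first argue the uniqueness part of item~1 once and for all. Every antimorphism of $D_m$ has the form $\Psi_x$ with $x \in \Z_m$, and writing $w = w_0 w_1 \ldots w_{s-1}$, the requirement $w_0 = \Psi_x(w_{s-1}) = x - w_{s-1}$ pins $x$ down completely, so at most one element of $D_m$ can fix $w$. It then remains only to establish the existence of such a $\Theta$ together with the identity $\b(w) = \#\PextT(w) - 1$.

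For the base case $1 \le |w| < 2b$, \Cref{TM_kratke_BS} already furnishes the unique antimorphism $\Theta \in D_m$ fixing $w$ and tabulates both $\b(w)$ and $\#\PextT(w)$. I would simply read \Cref{TM_vycet_kratkych_BS} line by line: the four pairs $\bigl(\b(w),\,\#\PextT(w)\bigr)$ are $(0,1)$, $(1,2)$, $(0,1)$ and $(-1,0)$, and each of them satisfies $\b(w) = \#\PextT(w) - 1$, so the base case is done.

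For the inductive step I would fix a BS factor $w$ with $|w| \ge 2b$ and assume the corollary for every non-empty BS factor of length strictly smaller than $|w|$. \Cref{TM_dlouhe_BS} provides a BS factor $v$ with $\varphi(v) = w$ and $\b(w) = \b(v)$; since $\varphi$ is $b$-uniform by Property~\ref{p_uniform}, one has $2 \le |v| = |w|/b < |w|$, so the induction hypothesis applies to $v$ and yields a unique $\Ta \in D_m$ with $v = \Ta(v)$ and $\b(v) = \#\Pext_{\Ta}(v) - 1$. The second half of \Cref{TM_dlouhe_BS} then transfers the palindromic data to $w$, producing $\Tb \in D_m$ with $w = \Tb(w)$ and $\#\Pext_{\Tb}(w) = \#\Pext_{\Ta}(v)$. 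Chaining these equalities gives $\b(w) = \b(v) = \#\Pext_{\Ta}(v) - 1 = \#\Pext_{\Tb}(w) - 1$, which closes the induction.

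I do not anticipate any substantial obstacle: the two preparatory lemmas do all the real combinatorial work, so the remaining argument is essentially bookkeeping that aligns their outputs and verifies the numerical identity in the four rows of the table. The only point to keep an eye on is that the ancestor $v$ provided in the inductive step is a genuine, non-empty BS factor, but this is automatic from $|v| \ge 2$ and the bispeciality clause of \Cref{TM_dlouhe_BS}.
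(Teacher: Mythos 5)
Your proposal is correct and is exactly the argument the paper intends: the corollary is stated without proof precisely because it follows by reading \Cref{TM_vycet_kratkych_BS} row by row for $1 \le |w| < 2b$ and then iterating \Cref{TM_dlouhe_BS} (your induction on length) for $|w| \ge 2b$. Your separate uniqueness argument via $x = w_0 + w_{s-1}$ is a clean general form of what the paper extracts from Property \ref{p_pali2}, and the rest matches the paper's route.
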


\begin{thm} \label{TM_je_Gr}
The word $\tt_{b,m}$ is $D_m$-rich.
\end{thm}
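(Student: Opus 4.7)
The plan is to apply \Cref{G_rich_criterion} with $G = D_m$. The closure hypothesis is \Cref{TM_closed_under_Dm}, so three conditions remain: distinct antimorphisms separate any non-empty factor, distinct morphisms do the same, and equality holds in \eqref{nerovnost} for every $n \geq 1$. The first two are immediate from the explicit action of $D_m$: for a non-empty $v \in \L(\tt_{b,m})$, the first letters of $\Pi_x(v)$ and $\Psi_x(v)$ are $x + v_0$ and $x - v_{|v|-1}$ respectively, so the parameter $x$ is recoverable from the image.

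The bulk of the proof is the equality in \eqref{nerovnost}. Setting
\[
f(n) := \Delta \C(n) + \#D_m - \sum_{\Theta \in D_m^{(2)}} \bigl(\PT(n) + \PT(n+1)\bigr),
\]
I plan to show $f(1) = 0$ and $\Delta f(n) = 0$ for every $n \geq 1$, which gives $f(n) = 0$ on the entire required range. The base case is a direct count: $D_m^{(2)}$ consists of the $m$ involutive antimorphisms $\Psi_x$, each letter $k$ is a $\Psi_{2k}$-palindrome and each two-letter factor $kl$ is a $\Psi_{k+l}$-palindrome, so $\sum_\Theta \PT(1) = m$ and $\sum_\Theta \PT(2) = qm$ by Property~\ref{p_L2}; combining with $\C(1) = m$ and $\C(2) = qm$ yields $f(1) = 0$. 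For the inductive step in the non-periodic case $b \not\equiv 1 \mod m$, the target identity is
\[
\Delta^2 \C(n) = \sum_{\Theta \in D_m^{(2)}} \bigl(\PT(n+2) - \PT(n)\bigr),
\]
which I would prove by matching the contributions of each $w \in \L_n(\tt_{b,m})$: \eqref{C_je_suma_bw} supplies the left-hand side, the standard identity $\PT(n+2) = \sum_{v \in \PalT(\tt_{b,m}) \cap \L_n(\tt_{b,m})} \#\PextT(v)$ the right-hand side, and \Cref{TM_bispecialy_zaver} guarantees that each non-empty BS factor is a $\Theta$-palindrome for a unique $\Theta$ with $\b(w) = \#\PextT(w) - 1$, so BS contributions agree. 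The periodic case $b \equiv 1 \mod m$ is then handled by a short direct computation on the purely periodic word of period $m$.

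The main obstacle is the remaining matching step: non-BS $\Theta$-palindromes contribute zero to the left-hand side, so I need them to contribute zero to the right-hand side as well, which amounts to the claim that $\#\PextT(w) = 1$ for every non-BS $\Theta$-palindrome $w$. I plan to prove this from uniform recurrence together with closure under $\Theta$. If $w$ is not right-special and $c$ is the unique letter with $wc \in \L(\tt_{b,m})$, then $\Theta(c) w = \Theta(wc) \in \L(\tt_{b,m})$ by closure; within this factor the occurrence of $w$ must be followed by its unique right extension $c$, so $\Theta(c) w c \in \L(\tt_{b,m})$, and this is the unique $\Theta$-palindromic extension of $w$. A symmetric argument handles non-left-special $w$. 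With this auxiliary claim, every factor's contribution balances, $\Delta f(n) = 0$ holds, and the proof concludes via the base case.
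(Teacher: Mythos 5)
Your proposal is correct and follows essentially the same route as the paper: verify the base case $n=1$ of equality in \eqref{nerovnost} by direct count, reduce the inductive step to the identity $\Delta^2\C(n)=\sum_{\Theta}(\PT(n+2)-\PT(n))$ via \eqref{C_je_suma_bw} and \Cref{TM_bispecialy_zaver}, and conclude with \Cref{G_rich_criterion}. The only difference is that you spell out the proof that a non-bispecial $\Theta$-palindrome has exactly one $\Theta$-palindromic extension (the paper asserts this without proof), and your argument for it is sound.
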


\begin{proof}
First, let $b \not \equiv 1 \mod m$.
We show that
\begin{equation} \label{TM_pf_1}
\Delta \C (n) + 2m \ \  =  \sum_{\substack{\Theta \in D_m \\ \Theta \text{ antimorphism}}} \Bigl( \PT(n) + \PT(n+1) \Bigr) \qquad \hbox{for all } n \geq 1.
\end{equation}
Note that $\# D_m = 2m$.

First, we show the relation \eqref{TM_pf_1} for $n = 1$.
It is clear that $\C(1) = m$ and $\C(2) = qm$.
Thus, the left-hand side equals $qm - m + 2m = qm + m$.
According to Properties \ref{p_L2} and \ref{p_pali2}, it is clear that
$\displaystyle \sum_{\substack{\Theta \in D_m \\ \Theta \text{ antimorphism}}} \PT(1) = m$
and
$\displaystyle \sum_{\substack{\Theta \in D_m \\ \Theta \text{ antimorphism}}} \PT(2) = qm$.
Therefore, the right-hand side equals $qm + m$.

To show the relation \eqref{TM_pf_1},
we are going to verify for all $n \geq 1$ that the difference of the left-hand sides for indices $n+1$ and $n$
equals the difference of the right-hand sides for the same indices.
In other words, we are going to show that
\begin{equation} \label{TM_pf_2}
\Delta \C (n+1) - \Delta \C (n) = \sum_{\substack{\Theta \in D_m \\ \Theta \text{ antimorphism}}} \Bigl( \PT(n+2) - \PT(n) \Bigr)
\end{equation}
for all $n$.

According to the equation \eqref{C_je_suma_bw}, the left-hand side side can be written as
$$
\Delta^2 \C(n) = \sum_{\substack{ w \in \L_n(\tt_{b,m}) \\ w \text{ BS}}} \b(w)
$$
and for the right-hand side we can use
$$
\PT(n+2) - \PT(n) =  \sum_{\substack{ w \in \L_n(\tt_{b,m}) \\ w = \Theta(w)}} \bigl( \# \PextT(w) - 1 \bigr ).
$$
Using the fact that a non-bispecial $\Theta$-palindrome has exactly one $\Theta$-palindromic extension and \Cref{TM_bispecialy_zaver},
the relation \eqref{TM_pf_2} holds.

If $\tt_{b,m}$ is periodic, i.e., $b \equiv 1 \mod m$, the proof of the relation \eqref{TM_pf_2} can be done in a very similar way and  is left to the reader.

Finally, according to \Cref{G_rich_criterion} and Property \ref{p_pali2}, $\tt_{b,m}$ is $D_m$-rich.

\end{proof}

\section{Factor complexity}

To our knowledge, the factor complexity of the Thue-Morse sequence $\tt_{2,2}$ was described in
1989 independently in \cite{Br89} and \cite{LuVa} and of $\tt_{2,m}$ in \cite{TrSh}.

In the aperiodic case, to calculate the factor complexity, one can use \eqref{C_je_suma_bw} and \Cref{TM_kratke_BS,TM_dlouhe_BS}.
\Cref{tab:komplexita_tbm} shows the result: $\Delta \C(n)$ and $\C(n)$.
In the periodic case, the factor complexity is trivial: $\C(n) = m$ for all $n > 0$.

\renewcommand{\tabcolsep}{0.2cm}
\renewcommand{\arraystretch}{1.3}


\begin{table}[h!]
\begin{center}
\begin{tabular}{c|c|c}
$n$ & $\Delta \C(n)$ & $\C (n)$ \\ \hline
$0$ & $m -1$ & $1$ \\ \hline
$1$ & $qm - m$ & $m$ \\ \hline
$2 \leq n \leq b$ & $qm - m$ & $qm (n-1) - m (n-2)$ \\ \hline
$\begin{array}{c} b^k + 1 + \ell \\ k \geq 1, \, 0 \leq \ell < b^k - b^{k-1}\end{array}$ & $qm$ & $qm (n-1) - m(b^k - b^{k-1})$ \\ \hline
$\begin{array}{c} (2b-1)b^{k-1} + 1 + \ell \\ k \geq 1, \, 0 \leq \ell < b^{k+1} - 2b^{k} + b^{k-1} \end{array}$ & $qm - m$ & $qm (n-1) - m(b^k - b^{k-1} + \ell)$ \\ 
\end{tabular}
\end{center}
\caption{Values of $\Delta \C(n)$ and $\C(n)$ of the generalized Thue-Morse word $\tt_{b,m}$ for the aperiodic case $b \not \equiv 1 \mod m$.}
\label{tab:komplexita_tbm}
\end{table}

\section*{Acknowledgments}

This work was supported by the Czech Science Foundation
grant GA\v CR 201/09/0584, by the grants MSM6840770039 and LC06002
of the Ministry of Education, Youth, and Sports of the Czech
Republic, and by the grant of the Grant Agency of the Czech
Technical University in Prague grant No. SGS11/162/OHK4/3T/14.

\end{document}